\documentclass[10pt]{amsart}

\usepackage{amsfonts, mathrsfs}
\usepackage{amsthm}
\usepackage{amsmath}
\usepackage{graphicx}
\usepackage{amscd,amssymb,amsthm}

\setlength{\paperwidth}{210mm} \setlength{\paperheight}{297mm}
\setlength{\oddsidemargin}{0mm} \setlength{\evensidemargin}{0mm}
\setlength{\topmargin}{-20mm} \setlength{\headheight}{10mm}
\setlength{\headsep}{13mm} \setlength{\textwidth}{160mm}
\setlength{\textheight}{220mm} \setlength{\footskip}{15mm}
\setlength{\marginparwidth}{0mm} \setlength{\marginparsep}{0mm}

\title{Probability distribution built by Prabhakar function. Related Tur\'an and Laguerre inequalities}

\author{Tibor K. Pog\'any}
\address{Faculty of Maritime Studies, University of Rijeka, Rijeka 51000, Croatia \\
Applied Mathematics Institute, \'Obuda University, 1034 Budapest, Hungary}
\email{poganj@pfri.hr} 

\author{\v Zivorad Tomovski}
\address{Department of Mathematics, University of Rijeka, Radmile Matej\v ci\'c 2, 51000 Rijeka, Croatia\\
(On leave) Faculty of Natural Sciences and Mathematics, Institute of Mathematics, Saints Cyril and Methodius 
University, 1000 Skopje, Macedonia}

\keywords{Prabhakar (three--parameter Mittag--Leffler) function; Moments; Tur\'anian difference; Laguerre inequality;  
Functional bounds; Recurrence identities.}

\subjclass[2010]{26D15; 33E12; 60E05; 62E15.}

\newtheorem{lemma}{Lemma}
\newtheorem{theorem}{Theorem}
\newtheorem{corollary}{Corollary}
\newtheorem{proposition}{Proposition}
\newtheorem{remark}{Remark}

\begin{document}

\maketitle
\allowdisplaybreaks

\begin{abstract}
Introducing the discrete probability distribution by means of the Prabhakar (or the three--parameter Mittag--Leffler)  
function, we establish explicit expressions for raw and factorial moments and also general fractional order moments. Applying an   
elementary moment inequality we obtain functional upper bounds for the Tur\'anian difference for Prabhakar function. Finally,    
a Laguerre inequality is proved and functional upper bound has been given for Laguerreian difference for Prabhakar function. 
\end{abstract}

\allowdisplaybreaks

\section{Introduction}

The three--parameter Mittag--Leffler function was introduced by Prabhakar \cite{Prabhakar} in the form 
   \begin{equation} \label{M1}
	    E_{\alpha, \beta}^\gamma(z) = \sum_{k \geq 0} \frac{(\gamma)_k}{\Gamma(\alpha k+\beta)} 
			            \frac{z^k}{k!}, \qquad \min\{ \Re(\alpha),  \Re(\beta), \Re(\gamma)\}>0;\, z \in \mathbb C.
	 \end{equation}
Here and in what follows 
   \[ (\lambda)_{\mu} = \dfrac{\Gamma(\lambda+\mu)}{\Gamma(\lambda)} = 
			    \begin{cases}
             1 &  \mu = 0;\, \lambda \in \mathbb C \setminus \{0\}\\ 
             \lambda(\lambda+1) \cdots (\lambda + n-1) & \mu=n \in \mathbb N;\; \lambda \in \mathbb C
          \end{cases} \,; \]
it being understood conventionally that $(0)_0:= 1$, where the latter case stands for the standard familiar Pochhammer symbol 
(or shifted factorial). For comprehensive treatment of $E_{\alpha, \beta}^\gamma(z)$ consult e.g. \cite{Tomovski}. 

For $\gamma =1$, we recover with \eqref{M1} the two--parameter Mittag--Leffler function $E_{\alpha, \beta }(z)$ defined by 
   \[ E_{\alpha, \beta }(z) = \sum_{k \geq 0} \frac{z^k}{\Gamma(\alpha k + \beta)}, 
			                        \qquad \Re (\alpha )>0, \beta \in \mathbb{C};\,z\in \mathbb C \, . \]
Moreover, the case $\beta = \gamma =1$ yields the classical Mittag--Leffler function 
   \[ E_{\alpha }(z)=\sum_{k\geq 0}\frac{z^{k}}{\Gamma (\alpha k+1)},\qquad \Re(\alpha )>0;\,z\in \mathbb{C}. \]

\section{Fractional Poisson process with Prabhakar's three--parameter Mittag--Leffler function}

The Mittag-Leffler function appeared as residual waiting time between events in renewal processes already in the sixties of the 
past century, namely processes with properly scaled thinning out the sequence of events in a power law renewal process. Such a process 
in essences is a fractional Poisson process \cite{Beghin}, \cite{Laskin1}, \cite{Laskin2}, \cite{Mainardi}, \cite{Politi} 
which in the recent years has become subject of intensive research. The telegraph process is a finite--velocity one dimensional 
random motion of which various probabilistic features are known. 

The fractional extensions of the telegraph process $\{ \mathscr T_\alpha(t) \colon t \geq 0\}$, whose changes of 
direction are related to the fractional Poisson process $\{ \mathscr N_\alpha(t) \colon t \geq 0\}$ having distribution \cite{Beghin}
   \[ \mathbb P(\mathscr N_\alpha(t) = k) = \frac{\lambda^k}{E_\alpha(\lambda t^\alpha)} \frac{t^{\alpha k}}{\Gamma(\alpha k+1)}, 
			                                       \qquad k \in \mathbb{N}_{0}:=\mathbb{N}\cup \{0\};\,t\geq 0. \]
However, we point out that the fractional Poisson process resulting in $\{ \mathscr N_{\alpha, \beta}(t) \colon t\geq 0\}$, defined by 
the two--parameter Mittag--Leffler function $E_{\alpha, \beta}(\lambda t^\alpha)$ was studied by Herrmann \cite{Herrmann}. The 
related distribution reads
   \[ \mathbb P(\mathscr N_{\alpha, \beta}(t) = k) = \frac{\lambda^k}{E_{\alpha, \beta}(\lambda t^\alpha)} 
			                      \frac{t^{\alpha k}}{\Gamma (\alpha k + \beta)},\qquad k\in \mathbb{N}_{0};\,t\geq 0, \]
for which the related raw moments are obtained in terms of the Bell polynomials, see \cite[p. 5]{Herrmann}. 

Here we introduce a more general fractional Poisson process $\{\mathscr N_{\alpha, \beta}^\gamma(t) \colon t\geq 0\}$ defined by the 
Prabhakar (or in another words three--parameter Mittag--Leffler) function $E_{\alpha, \beta}^\gamma(\lambda t^\alpha)$ 
having distribution
   \begin{equation} \label{Y0}
	    \mathbb P(\mathscr N_{\alpha, \beta}^\gamma(t) = k) = \frac{\lambda^k}{E_{\alpha, \beta}^\gamma(\lambda t^\alpha)} 
			           \frac{(\gamma)_k\, t^{\alpha k}}{k!\, \Gamma(\alpha k+\beta)}, \qquad 
								 k \in \mathbb N_0;\, t \geq 0. 
	 \end{equation}
At this point it is worth to mention that there is a correspondence  between the non--homogeneous Poisson process 
$\{ \mathscr N(t) \colon t \geq 0\}$ with intensity function $\lambda \alpha t^{\alpha-1}$, that is 
   \[ \mathbb P( \mathscr N(t) = k) = {\rm e}^{-\lambda t^\alpha} \, \frac{(\lambda t^\alpha)^k}{\Gamma(k+1)}, 
	               \qquad \lambda, \alpha>0; k \in \mathbb N_0\, ,\]
and the Prabhakar--type fractional Poisson process $\{\mathscr N_{\alpha, \beta}^\gamma(t) \colon t\geq 0\}$. This connection is 
exposed in the following result.

\begin{proposition} Let $\min\{ \alpha,  \beta, \gamma, \lambda\}>0$ and $t \geq 0$. Then 
   \[ \mathbb P(\mathscr N_{\alpha, \beta}^\gamma(t) = k) = \dfrac{\dfrac{(\gamma)_k}{\Gamma(\alpha k + \beta)}\,
	              \mathbb P( \mathscr N(t) = k)}{\sum\limits_{n \geq 0} \dfrac{(\gamma)_n}{\Gamma(\alpha n + \beta)}\,
								\mathbb P( \mathscr N(t) = n)}, \qquad k \in \mathbb N_0\,,\]
where $\mathscr N(t)$ is a non--homogeneous Poisson process with intensity function $\lambda \alpha t^{\alpha-1}$.
\end{proposition}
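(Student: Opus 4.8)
The plan is to verify the claimed identity by direct substitution of the two defining probability mass functions, followed by elementary simplification. First I would insert the Poissonian expression $\mathbb P(\mathscr N(t)=k) = {\rm e}^{-\lambda t^\alpha}(\lambda t^\alpha)^k/\Gamma(k+1)$ into both the numerator and every summand of the denominator on the right--hand side of the asserted formula. Using $\Gamma(k+1)=k!$ and $(\lambda t^\alpha)^k = \lambda^k t^{\alpha k}$, the numerator collapses to
\[ \frac{(\gamma)_k}{\Gamma(\alpha k+\beta)}\,{\rm e}^{-\lambda t^\alpha}\,\frac{\lambda^k t^{\alpha k}}{k!}
   = {\rm e}^{-\lambda t^\alpha}\,\frac{(\gamma)_k\,\lambda^k t^{\alpha k}}{k!\,\Gamma(\alpha k+\beta)}\, . \]

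The key step is to recognise the normalising sum in the denominator. After factoring the common ${\rm e}^{-\lambda t^\alpha}$ out of the series, the surviving sum is
\[ \sum_{n\geq 0}\frac{(\gamma)_n}{\Gamma(\alpha n+\beta)}\,\frac{(\lambda t^\alpha)^n}{n!}\, , \]
which is precisely $E_{\alpha,\beta}^\gamma(\lambda t^\alpha)$ by the defining series \eqref{M1} evaluated at $z=\lambda t^\alpha$. Hence the denominator equals ${\rm e}^{-\lambda t^\alpha}\,E_{\alpha,\beta}^\gamma(\lambda t^\alpha)$, the exponential factor cancels against the one appearing in the numerator, and what remains is
\[ \frac{\lambda^k}{E_{\alpha,\beta}^\gamma(\lambda t^\alpha)}\,
   \frac{(\gamma)_k\, t^{\alpha k}}{k!\,\Gamma(\alpha k+\beta)}\, , \]
which is exactly the Prabhakar--type mass function \eqref{Y0}. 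This closes the verification.

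As for obstacles, there is essentially no deep difficulty: the argument is a mechanical rearrangement of the two definitions together with the identification of the normalising constant. The only point warranting a word of care is the legitimacy of pulling ${\rm e}^{-\lambda t^\alpha}$ through the infinite sum and the convergence of that sum. Both are immediate, since under the hypothesis $\min\{\alpha,\beta,\gamma,\lambda\}>0$ the Prabhakar function $E_{\alpha,\beta}^\gamma$ is entire in its argument, so its defining series converges absolutely for every value $z=\lambda t^\alpha$ with $t\geq 0$, which fully justifies the term--by--term manipulation. I therefore expect the entire proof to occupy only a few lines.
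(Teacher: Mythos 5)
Your verification is correct and is essentially the paper's own argument run in the opposite direction: the paper rewrites \eqref{Y0} as the ratio of Poisson masses weighted by $(\gamma)_k/\Gamma(\alpha k+\beta)$, while you substitute the Poisson mass function into the right--hand side and recognise the denominator as ${\rm e}^{-\lambda t^\alpha}E_{\alpha,\beta}^\gamma(\lambda t^\alpha)$ — the same cancellation either way. Nothing further is needed.
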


\begin{proof} Rewriting \eqref{Y0} into 
   \[ \mathbb P(\mathscr N_{\alpha, \beta}^\gamma(t) = k) = \frac{\dfrac{(\gamma)_k}{\Gamma(\alpha k + \beta)}\, 
	              \dfrac{(\lambda t^\alpha)^k}{k!}\, {\rm e}^{-\lambda t^\alpha}}{\sum\limits_{n \geq 0} 
								\dfrac{(\gamma)_n}{\Gamma(\alpha n + \beta)}\, \dfrac{(\lambda t^\alpha)^n}{n!}\, {\rm e}^{-\lambda t^\alpha}}\,, \] 
we arrive at the statement. 
\end{proof}

In accordance with \eqref{Y0} we shall study the probability distribution associated with the Prabhakar function 
$E_{\alpha, \beta}^\gamma(\lambda t^\alpha)$ letting  $\lambda = 1$ throughout for the sake of simplicity. So, consider a non--negative 
random variable $X$ on a standard probability space $(\Omega, \mathscr F, \mathbb P)$ having the fractional 
Poisson--type distribution 
    \[ \mathbb P_{\alpha, \beta}^\gamma(k) = \mathbb P(X = k) = \frac1{E_{\alpha, \beta}^\gamma(t^\alpha) } 
			           \frac{(\gamma)_k \,t^{\alpha k}}{k!\, \Gamma(\alpha k+\beta)}, \qquad 
								 k \in \mathbb N_0;\, t \geq 0, \]
where remains $\min\{\alpha, \beta, \gamma\} >0$. Being $\sum_{k \geq 0} \mathbb P_{\alpha, \beta}^\gamma(k) = 1$, the rv $X$ i
s well defined. This correspondence we quote in the sequel $X \sim {\rm ML}(\alpha, \beta, \gamma)$. 

Let us remind that the factorial moment of the rv $X$ of order $s \in \mathbb N$ is given by
   \[ \Phi_s = \mathbb E X(X-1) \cdots (X-s+1) = (-1)^s\, \mathbb E(-X)_s 
	           = \frac{{\rm d}^s}{{\rm d}t^s}\left(\mathbb E\,t^X\right)\Big|_{t=1}\,,\]
provided the moment generating function $M_X(t) = \mathbb E\, t^X$ there exists in some neighborhood of $t =1$ together will all 
its derivatives up to the order $s$. By virtue of the Vi\`ete--Girard formulae for expanding $X(X-1) \cdots (X-s+1)$ we obtain
   \[ \Phi_s = \sum _{r=1}^s (-1)^{s-r} \, e_r\, \mathbb EX^r\,;\]
here $e_r$ represents elementary symmetric polynomials:
   \[ e_r = e_r(\ell_1, \cdots, \ell_r) = \sum_{1 \le \ell_1< \cdots <\ell_r \le s-1}
                  \ell_1 \cdots \ell_r, \qquad r = \overline{0,s-1}.\]

\begin{theorem}
For all $\min\{ \Re(\alpha),  \Re(\beta), \Re(\gamma)\}>0$ the $s$--th raw moment of the rv $X \sim {\rm ML}(\alpha, \beta, \gamma)$ 
reads as follows
   \begin{equation} \label{N3}
	    \mathbb E X^s = \frac1{E_{\alpha, \beta}^\gamma(t^\alpha) } \sum_{j=0}^s (\gamma)_j\,{s \brace j} \,t^{\alpha j}\, 
			                E_{\alpha, \alpha j+\beta}^{\gamma+j}(t^\alpha), \qquad s \in \mathbb N_0,\, t \geq 0.
	 \end{equation}
Moreover, the $s$--th factorial moment 
   \begin{equation} \label{N5}
	    \Phi_s = \frac1{E_{\alpha, \beta}^\gamma(t^\alpha) } \sum _{r=1}^s (-1)^r \, e_r\,
			                   \sum_{j=0}^r (\gamma)_j\,{r \brace j} \,t^{\alpha j}\, E_{\alpha, \alpha j+\beta}^{\gamma+j}(t^\alpha)\,.
	 \end{equation}
\end{theorem}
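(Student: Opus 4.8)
The plan is to establish the raw--moment formula \eqref{N3} by a direct series manipulation and then deduce \eqref{N5} from it without further analytic work. First I would write out the definition of the raw moment as
\[ \mathbb E X^s = \frac1{E_{\alpha, \beta}^\gamma(t^\alpha)} \sum_{k \geq 0} k^s\,
        \frac{(\gamma)_k\, t^{\alpha k}}{k!\, \Gamma(\alpha k+\beta)}\,, \]
and replace the power $k^s$ by its expansion in falling factorials through the Stirling numbers of the second kind,
\[ k^s = \sum_{j=0}^s {s \brace j}\, k(k-1)\cdots(k-j+1) = \sum_{j=0}^s {s \brace j}\, \frac{k!}{(k-j)!}\,. \]
This is the natural device here, since the weight $1/k!$ carried by the distribution is exactly what will cancel against the rising $k!$ produced by the falling factorial.

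After interchanging the two summations, the factor $k!/(k-j)!$ vanishes for $k<j$, so the inner sum may be started at $k=j$; the substitution $m=k-j$ then rewrites the inner series, up to the overall prefactor $t^{\alpha j}$, as
\[ \sum_{m \geq 0} \frac{(\gamma)_{m+j}}{\Gamma(\alpha m+\alpha j+\beta)}\, \frac{t^{\alpha m}}{m!}\,. \]
The decisive algebraic step is the Pochhammer splitting $(\gamma)_{m+j} = (\gamma)_j\,(\gamma+j)_m$, which pulls the constant $(\gamma)_j$ out of the $m$--sum and leaves precisely the series defining the Prabhakar function $E_{\alpha, \alpha j+\beta}^{\gamma+j}(t^\alpha)$ in the notation of \eqref{M1} (first parameter $\alpha$ unchanged, second parameter $\alpha j+\beta$, third parameter $\gamma+j$). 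Reassembling the pieces reproduces \eqref{N3} exactly.

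For the factorial moment \eqref{N5} I would then simply insert the closed form \eqref{N3} for each $\mathbb E X^r$ into the Vi\`ete--Girard expansion $\Phi_s = \sum_{r=1}^s (-1)^{s-r}\, e_r\, \mathbb E X^r$ recorded before the statement, factor the common $1/E_{\alpha, \beta}^\gamma(t^\alpha)$ in front, and collect terms; this yields the nested double sum in \eqref{N5} verbatim, so no new idea is needed beyond \eqref{N3}.

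The only genuine technical point, and the step I would treat most carefully, is justifying the interchange of the $k$-- and $j$--summations. This is harmless because, for $\min\{\Re(\alpha),\Re(\beta),\Re(\gamma)\}>0$ and $t \geq 0$, all the terms are non--negative and the Prabhakar series converges, so the double series is absolutely summable and Tonelli/Fubini applies. Everything else — the Stirling expansion, the index shift $m=k-j$, and the Pochhammer identity — is routine bookkeeping.
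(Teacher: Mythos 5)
Your proposal is correct and follows essentially the same route as the paper: the paper expands $k^s$ as $\sum_{j=0}^s (-1)^j {s \brace j}(-k)_j$, which is precisely your falling--factorial/Stirling expansion $k^s=\sum_j {s\brace j}\,k!/(k-j)!$, and then carries out the same interchange of sums, index shift $m=k-j$, and Pochhammer splitting $(\gamma)_{m+j}=(\gamma)_j(\gamma+j)_m$ to recognize $E_{\alpha,\alpha j+\beta}^{\gamma+j}(t^\alpha)$, while \eqref{N5} is likewise obtained by mere substitution of \eqref{N3} into the Vi\`ete--Girard expansion. The only discrepancy --- the factor $(-1)^r$ printed in \eqref{N5} versus the $(-1)^{s-r}$ of that expansion --- is an inconsistency already present in the paper's own statement, not a flaw in your derivation.
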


\begin{proof} Recall the connection between the raw and the factorial moments of a rv:
   \[ \mathbb E X^s = \sum_{j=0}^s (-1)^j\,{s \brace j} \mathbb E(-X)_j, \qquad 
	                    {s \brace j} = \frac1{j!} \sum_{m=0}^j (-1)^{j-m} \binom{j}{m} m^s\,,\]
where the curly braces denote Stirling numbers of the second kind. As to the stated result we need only the direct calculation:
   \begin{align*} 
	    \mathbb E X^s &= \sum_{j=0}^s (-1)^j\, {s \brace j} \mathbb E(-X)_j 
			               = \sum_{j=0}^s (-1)^j\,{s \brace j} \sum_{k \geq 0} (-k)_j \, \mathbb P_{\alpha, \beta}^\gamma(k)  \\
			              &= \frac1{E_{\alpha, \beta}^\gamma(t^\alpha) } \sum_{j=0}^s (-1)^j\,{s \brace j} \sum_{k \geq 0} 
										   \frac{(-k)_j \,(\gamma)_k \,t^{\alpha k}}{k!\, \Gamma(\alpha k+\beta)}  \\
										&= \frac1{E_{\alpha, \beta}^\gamma(t^\alpha) \, \Gamma(\gamma)} \sum_{j=0}^s {s \brace j}\, t^{\alpha j}\,
										   \sum_{k \geq j} \frac{\Gamma(\gamma+(k-j)+j) \,t^{\alpha(k-j)}}{(k-j)!\, 
											 \Gamma(\alpha (k-j)+ \alpha j+\beta )}  \\
										&= \frac1{E_{\alpha, \beta}^\gamma(t^\alpha) } \sum_{j=0}^s \frac{\Gamma(\gamma+j)}{\Gamma(\gamma)} \,
										   \,{s \brace j} \,t^{\alpha j} \,\sum_{k \geq 0} \frac{(\gamma+j)_k \,t^{\alpha k}}
											 {k!\, \Gamma(\alpha k+ \alpha j+\beta)} \,,
	 \end{align*}
which is the statement \eqref{N3}. The derivation of \eqref{N5} is now obvious. 
\end{proof} 

To obtain the fractional order moments we need the so-called extended Hurwitz--Lerch Zeta (HLZ) function introduced by Gupta {\it et al.} 
\cite{GGOS} (see also \cite[p. 491, Eq. (1.20); p. 503, Eq. (6.2)]{Srivastava}) $\Phi^{(\rho, \sigma, \kappa)}_{\lambda, \mu; 
\nu}(z, s, a)$ is given by
   \begin{equation} \label{N6}
      \Phi^{(\rho, \sigma, \kappa)}_{\lambda, \mu; \nu}(z, s, a) = \sum_{n \geq 0} 
             \frac{(\lambda)_{\rho n}\, (\mu)_{\sigma n}}{n!\, (\nu)_{\kappa n}}\, \dfrac{z^n}{(n+a)^s}\,,
   \end{equation}
where $\lambda, \mu \in \mathbb C;\, a, \nu \in \mathbb C \setminus \mathbb Z_0^-$; $\rho, \sigma, \kappa >0$; $\kappa-\rho-\sigma+1>0$ 
when $s, z \in \mathbb C$; $\kappa-\rho-\sigma = -1$ and $s \in \mathbb C$ when $|z|< \delta = \rho^{-\rho} \sigma^{-\sigma} 
\kappa^\kappa$; while $\kappa-\rho-\sigma = -1$ and $\Re(s + \nu - \lambda - \mu) >1$ when $|z|= \delta$. Specially by letting 
$\sigma \to 0$ in \eqref{N6} we arrive at the generalized HLZ function $\Phi^{(\rho, 0, \kappa)}_{\lambda, \mu; \nu}(z, s, a) \equiv 
\Phi^{(\rho, \kappa)}_{\,\,\lambda; \nu}(z, s, a)$ which is of importance one for our next result.

\begin{theorem} Let $X \sim {\rm ML}(\alpha, \beta, \gamma)$. For all $\min\{\alpha, \beta, \gamma\}>0$ and for all $s \geq 0$ 
we have
   \begin{equation} \label{N7}
	    \mathbb E X^s = \frac{\gamma\,t^\alpha}{E_{\alpha, \beta}^\gamma(t^\alpha)\, \Gamma(\alpha+\beta)}\, 
										  \Phi^{(1, \alpha)}_{\gamma+1; \alpha+\beta}(t^\alpha, 1-s, 1)\,.
   \end{equation}
\end{theorem}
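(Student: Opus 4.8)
The plan is to evaluate the fractional moment straight from its series definition and then recognize the outcome as a generalized Hurwitz--Lerch Zeta function. Writing $\mathbb E X^s = \sum_{k\geq0} k^s\,\mathbb P_{\alpha,\beta}^\gamma(k)$, the $k=0$ term carries the factor $k^s$ and hence drops out for every $s \geq 0$, so I would start from
\[
   \mathbb E X^s = \frac1{E_{\alpha,\beta}^\gamma(t^\alpha)} \sum_{k\geq1} \frac{k^s\,(\gamma)_k\,t^{\alpha k}}{k!\,\Gamma(\alpha k+\beta)}\,.
\]

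The structural move is the shift $k = n+1$ with $n \geq 0$, which relocates the series index so as to match the summation in \eqref{N6}. After the shift I would apply the elementary reductions
\[
   (\gamma)_{n+1} = \gamma\,(\gamma+1)_n\,, \qquad \Gamma\big(\alpha(n+1)+\beta\big) = \Gamma(\alpha+\beta)\,(\alpha+\beta)_{\alpha n}\,,
\]
together with $t^{\alpha(n+1)} = t^\alpha\,t^{\alpha n}$ and $(n+1)! = (n+1)\,n!$, and collect the constant factor $\gamma\,t^\alpha/\Gamma(\alpha+\beta)$ outside the sum. The crucial observation is that the surviving power of the index combines as $(n+1)^s/(n+1) = (n+1)^{-(1-s)}$, so the remaining series is exactly
\[
   \sum_{n\geq0} \frac{(\gamma+1)_n}{n!\,(\alpha+\beta)_{\alpha n}}\,\frac{t^{\alpha n}}{(n+1)^{1-s}}
   = \Phi^{(1,\alpha)}_{\gamma+1;\alpha+\beta}(t^\alpha, 1-s, 1)\,,
\]
which is \eqref{N6} in the $\sigma\to0$ limit with $\rho=1$, $\kappa=\alpha$, $\lambda=\gamma+1$, $\nu=\alpha+\beta$, $z=t^\alpha$, $a=1$ and Zeta--order $1-s$. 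Substituting this back yields \eqref{N7}.

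I do not expect a genuine obstacle here; the computation is a bookkeeping exercise with Pochhammer symbols and Gamma ratios, and the one point deserving care is the convergence and the legitimacy of the term--by--term reindexing for non--integer $s$. For this I would appeal to the parameter condition attached to \eqref{N6}: with $\sigma=0$, $\rho=1$, $\kappa=\alpha$ one has $\kappa-\rho-\sigma+1 = \alpha > 0$, placing us in the regime where the defining series of $\Phi^{(1,\alpha)}_{\gamma+1;\alpha+\beta}$ converges for all $z \in \mathbb C$ and every complex order, in particular for each real $s \geq 0$. The same bound secures absolute convergence of the moment series, so dropping the $k=0$ term, performing the shift $k=n+1$, and factoring the constant are all justified, and \eqref{N7} follows.
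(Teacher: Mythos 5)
Your proposal is correct and follows essentially the same route as the paper: drop the vanishing zeroth term, shift the index by one, reduce the Pochhammer and Gamma factors so that $\gamma\,t^\alpha/\Gamma(\alpha+\beta)$ factors out, and identify the remaining series as $\Phi^{(1,\alpha)}_{\gamma+1;\alpha+\beta}(t^\alpha,1-s,1)$ via the $\sigma\to0$ case of \eqref{N6}, with convergence secured by $\kappa-\rho-\sigma+1=\alpha>0$. Your explicit justification of the reindexing and convergence is if anything slightly more careful than the paper's appeal to ``the convergence constraints listed around \eqref{N6}.''
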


\begin{proof} By definition, for all $s>0$ we have
   \[ \mathbb E X^s = \frac1{E_{\alpha, \beta}^\gamma(t^\alpha)} \sum_{n \geq 1} n^s \, 
			                 \frac{(\gamma)_n \,t^{\alpha n}}{n!\, \Gamma(\alpha n+\beta)}\,, \]
since the therm zeroth vanishes. In turn {\it mutatis mutandis}
   \begin{align*}
	    \mathbb E X^s &= \frac1{E_{\alpha, \beta}^\gamma(t^\alpha)} \sum_{n \geq 1}  \, 
			                 \frac{n^{s-1}\, (\gamma)_n\, t^{\alpha n}}{(n-1)!\, \Gamma(\alpha n+\beta)} \\
										&= \frac{\gamma\,t^\alpha}{E_{\alpha, \beta}^\gamma(t^\alpha)} \sum_{n \geq 0}  \, 
			                 \frac{(\gamma+1)_n\, t^{\alpha n}}{n!\, \Gamma(\alpha n+\alpha+\beta)\,(n+1)^{1-s}} \\
										&= \frac{\gamma\,t^\alpha}{E_{\alpha, \beta}^\gamma(t^\alpha)\, \Gamma(\alpha+\beta)}\, 
										   \Phi^{(1, \alpha)}_{\gamma+1; \alpha+\beta}(t^\alpha, 1-s, 1)\,.
	 \end{align*}
Now, being $\lambda = \gamma+1, \nu=\alpha+\beta, z=t^\alpha, s \mapsto 1-s, \rho = 1, \kappa = \alpha$ and $a=1$, by applying the 
convergence constraints for $\Phi^{(\rho, \kappa)}_{\,\,\lambda; \nu}(z, s, a)$ listed around \eqref{N6}, we finish the proof 
of the assertion. 
\end{proof} 

\begin{remark} It is worth to mention that Herrmann \cite[p. 5, Eq. (3.10)]{Herrmann} has been derived the raw integer order moments 
for the two--parameter Mittag--Leffler distributed rv which is covered with our $Y \sim {\rm ML}(\alpha, \beta, 1)$ distribution, 
in the form 
   \[ \mathbb EY^n = \frac1{E_{\alpha, \beta}(t)}\, \left( t\, \frac{\rm d}{{\rm d}t}\right)^n\, E_{\alpha, \beta}(t), 
	                   \qquad n \in \mathbb N_0\, . \]
Recalling that $E_{\alpha, \beta}^1(t) = E_{\alpha, \beta}(t)$, our relations \eqref{N3} and \eqref{N7} cover this result. Indeed, taking 
$s=1, \gamma=1, t \mapsto t^\frac1\alpha$ in \eqref{N3} we have 
   \[ \mathbb EX = t\, \frac{E_{\alpha, \alpha+\beta}^2(t)}{E_{\alpha, \beta}(t)}\,.\] 
On the other hand, since 
   \[ \left(E_{\alpha, \beta}(t)\right)' = \sum_{n \geq 1} \frac{n\, t^{n-1}}{\Gamma(\alpha n+\beta)} 
	               = \sum_{n \geq 1} \frac{(2)_{n-1} t^{n-1}}{(n-1)!\,\Gamma(\alpha (n-1) + \alpha + \beta)} 
								 = E_{\alpha, \alpha+\beta}^2(t)\,,\]
we see that $\mathbb EX \equiv \mathbb EY$. 

Next, by setting $s=1, \gamma=1, t \mapsto t^\frac1\alpha$ formula \eqref{N7} becomes
   \[ \mathbb E X = t\,\frac{\Phi^{(1, \alpha)}_{2; \alpha+\beta}(t, 0, 1) }{E_{\alpha, \beta}(t)\, \Gamma(\alpha+\beta)}										
									= \frac{t}{E_{\alpha, \beta}(t)\, \Gamma(\alpha+\beta)}\, 
									  \sum_{n \geq 0} \frac{(2)_n t^n}{n!\,(\alpha+\beta)_{\alpha n}}.\]
The rest is obvious. \hfill  $\Box$
\end{remark}

\begin{remark}
In fact, Theorem 2 is the fractional (in $s$) counterpart of the first claim \eqref{N3} of Theorem 1. However, the advantage of 
preceding is that it consists from a finite $s+1$--term linear combination of three parameter Mittag--Leffler functions of 
Prabhakar type. Specifying now $s \in \mathbb N_0$ in \eqref{N7}, we obtain a new set of summation formulae for a special class 
of extended HLZ functions $\Phi^{(1, \alpha)}_{\gamma+1; \alpha+\beta}(t^\alpha, 1-s, 1)$ in terms of related Prabhakar function. 
\hfill $\Box$
\end{remark}

\begin{corollary} For all $\min\{\alpha, \beta, \gamma\}>0$ and for all $s \in \mathbb N_0$ we have
   \[ \Phi^{(1, \alpha)}_{\gamma+1; \alpha+\beta}(t^\alpha, 1-s, 1) = \frac{\Gamma(\alpha+\beta)}{\gamma\,t^\alpha}\,
	              \sum_{j=0}^s (\gamma)_j\,{s \brace j} \,t^{\alpha j}\, E_{\alpha, \alpha j+\beta}^{\gamma+j}(t^\alpha)\,, 
								\qquad t >0.\] 
\end{corollary}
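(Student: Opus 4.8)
The plan is to recognize this Corollary as an immediate consequence of the two independent evaluations of the raw moment $\mathbb E X^s$ already established in Theorems 1 and 2, read off on their common domain $s \in \mathbb N_0$. First I would note that formula \eqref{N3} holds for every $s \in \mathbb N_0$, while formula \eqref{N7} holds for all real $s \geq 0$; in particular both are valid simultaneously whenever $s \in \mathbb N_0$. Since the left--hand sides coincide, so must the right--hand sides, which yields
\[
\frac{1}{E_{\alpha,\beta}^\gamma(t^\alpha)}\sum_{j=0}^s (\gamma)_j \, {s \brace j}\, t^{\alpha j}\, E_{\alpha, \alpha j + \beta}^{\gamma+j}(t^\alpha)
= \frac{\gamma\, t^\alpha}{E_{\alpha,\beta}^\gamma(t^\alpha)\,\Gamma(\alpha+\beta)}\, \Phi^{(1,\alpha)}_{\gamma+1;\alpha+\beta}(t^\alpha, 1-s, 1).
\]

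Next I would clear the common factor. For $t > 0$ and $\min\{\alpha,\beta,\gamma\} > 0$ every term of the defining series of $E_{\alpha,\beta}^\gamma(t^\alpha)$ is strictly positive, so $E_{\alpha,\beta}^\gamma(t^\alpha) > 0$ and may be cancelled from both sides; likewise $\gamma\, t^\alpha > 0$, so I may divide through by $\gamma\, t^\alpha / \Gamma(\alpha+\beta)$ and solve for the extended Hurwitz--Lerch Zeta function. Rearranging isolates
\[
\Phi^{(1,\alpha)}_{\gamma+1;\alpha+\beta}(t^\alpha, 1-s, 1) = \frac{\Gamma(\alpha+\beta)}{\gamma\, t^\alpha}\, \sum_{j=0}^s (\gamma)_j\, {s \brace j}\, t^{\alpha j}\, E_{\alpha, \alpha j + \beta}^{\gamma+j}(t^\alpha),
\]
which is precisely the asserted identity for $t > 0$.

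There is essentially no obstacle here beyond bookkeeping: the entire content of the Corollary is carried by the two preceding theorems, and the only points needing a word of justification are the overlap of their validity ranges at integer $s$ and the nonvanishing of the factors $E_{\alpha,\beta}^\gamma(t^\alpha)$ and $\gamma\, t^\alpha$ being cancelled. If one wished to avoid invoking Theorem 2 altogether, an alternative would be to substitute $s \mapsto 1-s$, $\rho = 1$, $\kappa = \alpha$, $\lambda = \gamma+1$, $\nu = \alpha+\beta$, $a = 1$ directly into the series \eqref{N6} defining $\Phi^{(1,\alpha)}_{\gamma+1;\alpha+\beta}$ and re-index; but this merely reconstructs the computation already performed in the proof of Theorem 2, so the equate--the--two--theorems route is the cleanest and is the one I would present.
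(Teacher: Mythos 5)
Your proposal is correct and is exactly the argument the paper intends: the Corollary is presented (via Remark 2) as the immediate consequence of equating the two expressions \eqref{N3} and \eqref{N7} for $\mathbb E X^s$ at integer $s$ and cancelling the positive factors $E_{\alpha,\beta}^\gamma(t^\alpha)$ and $\gamma\,t^\alpha/\Gamma(\alpha+\beta)$. No further commentary is needed.
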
 

\section{Functional upper bounds for a Tur\'anian difference built by Prabhakar function}

Our next goal is to derive a Tur\'an type inequality for the Prabhakar function. In this purpose let us introduce the 
generalized Tur\'anian difference 
   \[ \Delta_T(t) = \left[E_{\alpha, \alpha+\beta}^{\gamma+1}(t^\alpha)  \right]^2 
						-    E_{\alpha, \beta}^\gamma(t^\alpha)\,E_{\alpha, 2\alpha+\beta}^{\gamma+2}(t^\alpha),\qquad t>0\, ,\]
which was built with respect to all three parameters in the initial Mittag--Leffler function $E_{\alpha, \beta}^\gamma(t^\alpha)$. To 
establish a functional upper bound for $\Delta_T(t)$ we study firstly the first two raw moments, using  Theorem 2. 
The resulting expressions are
   \begin{align} 
	    \mathbb E X   &= \frac{\gamma\,t^\alpha}{E_{\alpha, \beta}^\gamma(t^\alpha)\, \Gamma(\alpha+\beta)}\, 
										   \Phi^{(1, \alpha)}_{\gamma+1; \alpha+\beta}(t^\alpha, 0, 1) \notag \\
			              &= \frac{\gamma\,t^\alpha\, E_{\alpha, \alpha+\beta}^{\gamma+1}(t^\alpha)}
										   {E_{\alpha, \beta}^\gamma(t^\alpha)} \\ \label{N9}
			\mathbb E X^2 &= \frac{\gamma\,t^\alpha}{E_{\alpha, \beta}^\gamma(t^\alpha)\, \Gamma(\alpha+\beta)}\, 
										   \Phi^{(1, \alpha)}_{\gamma+1; \alpha+\beta}(t^\alpha, 1-s, 1) \notag \\
			              &= \frac1{E_{\alpha, \beta}^\gamma(t^\alpha)} \sum_{n \geq 1}  \, 
			                 \frac{[(n-1)+1]\, (\gamma)_n\, t^{\alpha (n-1+1)}}{(n-1)!\, \Gamma(\alpha (n-1)+\alpha+\beta)} \notag \\
										&= \frac{\gamma\,t^\alpha}{E_{\alpha, \beta}^\gamma(t^\alpha)} \left[ E_{\alpha, \alpha+\beta}^{\gamma+1}(t^\alpha) 
										 + (\gamma+1) t^\alpha \,E_{\alpha, 2\alpha+\beta}^{\gamma+2}(t^\alpha)  \right]\, .
	 \end{align}
Making use of the basic moment property $\mathbb EX^2 \geq (\mathbb EX)^2$ valid for all finite second order moment random variables, 
we deduce: 
   \[ E_{\alpha, \beta}^\gamma(t^\alpha)\, \left[ E_{\alpha, \alpha+\beta}^{\gamma+1}(t^\alpha) 
						+    (\gamma+1) t^\alpha \,E_{\alpha, 2\alpha+\beta}^{\gamma+2}(t^\alpha)  \right] 
						\geq \gamma\,t^\alpha\, \left[E_{\alpha, \alpha+\beta}^{\gamma+1}(t^\alpha)  \right]^2\,, \]
which is equivalent to 
   \begin{equation} \label{N9}
	    E_{\alpha, \beta}^\gamma(t^\alpha)\, \left[ E_{\alpha, \alpha+\beta}^{\gamma+1}(t^\alpha) 
						+    t^\alpha \,E_{\alpha, 2\alpha+\beta}^{\gamma+2}(t^\alpha) \right] 
						\geq \gamma\,t^\alpha\, \Delta_T(t) \, .
	 \end{equation}
	
\begin{lemma} For all $\min\{\alpha, \beta, \gamma\}>0$ we have 
   \begin{equation} \label{N10}
		  E_{\alpha, \beta}^\gamma(t^\alpha) \leq \frac1{\Gamma_0}\, \frac1{(1-t^\alpha)^\gamma}\,, \qquad t\in (0,1)\, ,
	 \end{equation}
where $\Gamma_0 = \min\limits_{n \in \mathbb N_0} \Gamma(\alpha n+\beta)$.

Moreover, for all $\alpha \geq 1,\, \beta \geq t_0,\, \gamma>0$, where $(t_0, \Gamma(t_0))$ is the unique positive minimum of 
the $\Gamma$ function, there holds
   \begin{equation} \label{N11}
		  E_{\alpha, \beta}^\gamma(t^\alpha) \leq \frac1{\Gamma(\beta)} \, {}_1F_1(\gamma; \beta; t^\alpha)\,, \qquad t>0\, .
	 \end{equation}
Here ${}_1F_1(a; b; z)$ stands for the Kummer's confluent hypergeometric function 
   \[ {}_1F_1(a; b; z) = \sum_{k \geq 0} \frac{(a)_k}{(b)_k} \frac{z^k}{k!}\, . \]
\end{lemma}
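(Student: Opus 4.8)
The plan is to prove both bounds by a common strategy: estimate the single Gamma factor in the denominator of each term of the defining series \eqref{M1}, and then sum the resulting majorant series in closed form. Since under the stated hypotheses $\min\{\alpha,\beta,\gamma\}>0$ every summand of
\[ E_{\alpha, \beta}^\gamma(t^\alpha) = \sum_{k \geq 0} \frac{(\gamma)_k}{\Gamma(\alpha k+\beta)}\, \frac{t^{\alpha k}}{k!} \]
is nonnegative for $t>0$, each inequality reduces to a termwise comparison of the denominators.

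For \eqref{N10} I would replace every $\Gamma(\alpha k+\beta)$ by its least value $\Gamma_0 = \min_{n \in \mathbb N_0}\Gamma(\alpha n+\beta)$, giving the uniform estimate $E_{\alpha, \beta}^\gamma(t^\alpha) \leq \Gamma_0^{-1} \sum_{k \geq 0} (\gamma)_k\, t^{\alpha k}/k!$. The remaining sum is the generalized binomial series
\[ (1-t^\alpha)^{-\gamma} = \sum_{k \geq 0} \frac{(\gamma)_k}{k!}\, t^{\alpha k}, \]
which converges precisely when $t^\alpha<1$, that is $t \in (0,1)$; substituting it yields \eqref{N10} immediately, with the interval of validity forced by the convergence of the majorant rather than by the (entire) function $E_{\alpha, \beta}^\gamma$ itself.

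For \eqref{N11} the idea is to compare $\Gamma(\alpha k+\beta)$ against $\Gamma(k+\beta)$ instead of against a single constant. The key fact I would invoke is that $\Gamma$ is increasing on $[t_0,\infty)$, where $t_0$ is its unique positive minimizer. Under $\alpha \geq 1$ and $\beta \geq t_0$ one has, for every $k \in \mathbb N_0$, both $k+\beta \geq t_0$ and $\alpha k+\beta \geq k+\beta$, so monotonicity gives $\Gamma(\alpha k+\beta) \geq \Gamma(k+\beta)$, hence $1/\Gamma(\alpha k+\beta) \leq 1/\Gamma(k+\beta)$. Summing and using $\Gamma(k+\beta) = \Gamma(\beta)\,(\beta)_k$ turns the majorant into $\Gamma(\beta)^{-1} \sum_{k \geq 0} \frac{(\gamma)_k}{(\beta)_k} \frac{t^{\alpha k}}{k!}$, which is exactly $\Gamma(\beta)^{-1}\,{}_1F_1(\gamma;\beta;t^\alpha)$, proving \eqref{N11}.

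The only subtle step, and the one I would check most carefully, is the placement of the arguments in the increasing branch of $\Gamma$: the hypothesis $\beta \geq t_0$ is imposed precisely so that the smallest relevant argument $k+\beta$ (attained at $k=0$) already lies to the right of the minimizer $t_0$, legitimizing the monotonicity comparison for all $k$, while $\alpha \geq 1$ secures $\alpha k+\beta \geq k+\beta$. Everything else is term-by-term bounding and recognition of the standard binomial and confluent hypergeometric series.
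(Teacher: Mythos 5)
Your proof is correct and follows essentially the same route as the paper: termwise bounding of the Gamma factors in the series, followed by summation of the binomial series $(1-t^\alpha)^{-\gamma}$ for \eqref{N10} and of the confluent hypergeometric series for \eqref{N11}. In fact, for \eqref{N11} your justification is the more careful one --- the paper's prose only asserts $\Gamma(\alpha k+\beta)\ge \Gamma(\beta)$, whereas the displayed bound with $(\beta)_k$ in the denominator requires exactly your stronger comparison $\Gamma(\alpha k+\beta)\ge \Gamma(k+\beta)=\Gamma(\beta)\,(\beta)_k$, which you correctly derive from $\alpha\ge 1$, $\beta\ge t_0$ and the monotonicity of $\Gamma$ on $[t_0,\infty)$.
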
 

\begin{proof} As to the proof of \eqref{N10} we only have to remind that in the right half--plane there is the unique 
$\min \Gamma(t_0) \approx 0.885603$ at $t_0 \approx 1.46163$, say. (Obviously we cannot guarantee that $x_0 \in \{ \alpha k+\beta 
\colon k \in \mathbb N_0\}$). So 
   \[ E_{\alpha, \beta}^\gamma(t^\alpha) \leq \frac1{\min\limits_{k \in \mathbb N_0} \Gamma(\alpha k+\beta)} 
	                                            \sum_{k \geq 0} (\gamma)_k \frac{t^{\alpha k}}{k!} 
																				 =    \frac1{\Gamma_0}\, \frac1{(1-t^\alpha)^\gamma},\qquad t\in (0,1)\, . \]
When $\alpha \geq 1$ and $\beta \geq t_0$, then obviously $\Gamma(\alpha k+\beta) \geq \Gamma(\beta),\, k \in \mathbb N_0$. Then 
   \[ E_{\alpha, \beta}^\gamma(t^\alpha) \leq \frac1{\Gamma(\beta)} \,\sum_{k \geq 0} \frac{(\gamma)_k}{(\beta)_k} \frac{z^k}{k!}\,,\]
which coincides with \eqref{N11}.
\end{proof}

Now, combining \eqref{N9}, \eqref{N10} and/or \eqref{N11} we arrive at

\begin{theorem} For all $\min\{\alpha, \beta, \gamma\}>0$ and for all $t \in (0,1)$ we have
   \[ \Delta_T(t) \leq \frac1{\gamma \Gamma_0\, t^\alpha (1-t^\alpha)^{2\gamma+1}}\,
	                     \left[ \frac1{\Gamma_1} + \frac{t^\alpha}{\Gamma_2\, (1-t^\alpha)}\right], \]
where $\Gamma_p = \min\limits_{n \in \mathbb N_p} \Gamma(\alpha n+\beta),\, p=0,1,2$; $\mathbb N_q = \{ q, q+1, q+2, \cdots \}, 
q \in \mathbb N_0$. 

Moreover, for all $\alpha \geq 1,\, \beta \geq t_0,\, \gamma>0$ and for all $t>0$ there holds 
   \begin{align*} 
	    \Delta_T(t) &\leq \frac{{}_1F_1(\gamma; \beta; t^\alpha)}{\gamma\,\Gamma(\beta)\, t^\alpha} \,
	                      \left[ \frac1{\Gamma(\alpha+\beta)}\,{}_1F_1(\gamma+1; \alpha+\beta; t^\alpha) \right. \\
									&\qquad + \left. \frac{t^\alpha }{\Gamma(2\alpha+\beta)}\,{}_1F_1(\gamma+2; 2\alpha+\beta; t^\alpha)\right]\,, 
	 \end{align*}
where $(t_0, \Gamma(t_0))$ is the positive minimum of the $\Gamma$--function.
\end{theorem}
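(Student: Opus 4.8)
The plan is to start from the already-established inequality \eqref{N9}, which upon division by $\gamma\,t^\alpha$ rearranges into
\[ \Delta_T(t) \leq \frac1{\gamma\,t^\alpha}\, E_{\alpha, \beta}^\gamma(t^\alpha)\, \left[ E_{\alpha, \alpha+\beta}^{\gamma+1}(t^\alpha) + t^\alpha\, E_{\alpha, 2\alpha+\beta}^{\gamma+2}(t^\alpha)\right]. \]
Both asserted bounds will follow by estimating each of the three Prabhakar factors on the right from above by means of Lemma 1: the first bound corresponds to feeding in \eqref{N10}, the second to feeding in \eqref{N11}. In each case the argument is a direct termwise substitution, so the work is purely one of correct bookkeeping.

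For the first bound I would apply \eqref{N10} to each of the three factors. The only delicate point is how the constant $\min\Gamma(\alpha n+\beta)$ transforms under the parameter shifts. For $E_{\alpha, \alpha+\beta}^{\gamma+1}(t^\alpha)$ the relevant infimum is $\min_{n \in \mathbb N_0}\Gamma(\alpha n + \alpha + \beta) = \min_{n\in\mathbb N_0}\Gamma(\alpha(n+1)+\beta) = \Gamma_1$ after reindexing $n+1 \mapsto m \in \mathbb N_1$; likewise $E_{\alpha, 2\alpha+\beta}^{\gamma+2}(t^\alpha)$ produces $\Gamma_2$. Collecting the three estimates yields a factor $\frac1{\Gamma_0(1-t^\alpha)^\gamma}$ times the bracket $\bigl[\frac1{\Gamma_1(1-t^\alpha)^{\gamma+1}}+\frac{t^\alpha}{\Gamma_2(1-t^\alpha)^{\gamma+2}}\bigr]$, and factoring $(1-t^\alpha)^{-(\gamma+1)}$ out of the bracket turns the $\gamma$ from the first factor into the overall exponent $2\gamma+1$ on $(1-t^\alpha)$, matching the statement exactly.

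For the second bound I would instead invoke \eqref{N11}. First I would check that its hypotheses $\alpha\geq 1$ and $\beta\geq t_0$ are preserved under the shifts: indeed $\alpha\geq 1$ is unchanged, while $\alpha+\beta\geq 1+t_0>t_0$ and $2\alpha+\beta\geq 2+t_0>t_0$. Replacing $E_{\alpha,\beta}^\gamma$, $E_{\alpha,\alpha+\beta}^{\gamma+1}$ and $E_{\alpha,2\alpha+\beta}^{\gamma+2}$ by $\frac1{\Gamma(\beta)}{}_1F_1(\gamma;\beta;t^\alpha)$, $\frac1{\Gamma(\alpha+\beta)}{}_1F_1(\gamma+1;\alpha+\beta;t^\alpha)$ and $\frac1{\Gamma(2\alpha+\beta)}{}_1F_1(\gamma+2;2\alpha+\beta;t^\alpha)$ respectively, and substituting into the displayed rearrangement, delivers the second inequality directly.

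Since everything reduces to substitution into Lemma 1, there is no genuine analytic obstacle. The single point demanding care is the reindexing bookkeeping in the first bound, where one must confirm that the three \emph{distinct} constants $\Gamma_0,\Gamma_1,\Gamma_2$ emerge and that the powers of $(1-t^\alpha)$ combine to the exponent $2\gamma+1$; one must also respect that \eqref{N10}, and hence the first bound, is valid only on $t\in(0,1)$, whereas \eqref{N11} and the second bound extend to all $t>0$.
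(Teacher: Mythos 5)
Your proposal is correct and is precisely the paper's argument: the paper derives the theorem by "combining \eqref{N9}, \eqref{N10} and/or \eqref{N11}", i.e.\ dividing the moment inequality \eqref{N9} by $\gamma t^\alpha$ and bounding each of the three Prabhakar factors via Lemma~1. Your bookkeeping of the shifted constants $\Gamma_1$, $\Gamma_2$, the exponent $2\gamma+1$, and the hypothesis checks $\alpha+\beta, 2\alpha+\beta\geq t_0$ is exactly the detail the paper leaves implicit.
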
 

\section{Toward Laguerre type inequality results} 

The Laguerreian difference (see e.g. the classical paper \cite{Skovgaard} or the recent works by Baricz) concerns the 
subsequent derivatives of certain entire function $F$ of special type. It is
   \[ \Delta_L^n (t) = [F^{(n)}(t)]^2 - F^{(n-1)}(t)\,F^{(n+1)}(t), \qquad n \in \mathbb N\,.\]
The estimate $\Delta_L^n (t) \geq 0, \, t\in I \subseteq \mathbb R$ we call Laguerre inequality of order $n$.  

In turn, taking $t \mapsto t^{\frac1\alpha}$ in the Prabhakar type model $X \sim {\rm ML}(\alpha, \beta, \gamma)$ we get a 
transformed  random variable $X_1$, say. Since the characteristic function (CHF) of $X_1$ reads
   \[ \varphi_t(x) = \mathbb E {\rm e}^{{\rm i}x X_1} = \frac{E_{\alpha, \beta}^\gamma\left(t{\rm e}^{{\rm i}x}\right)}
	                   {E_{\alpha, \beta}^\gamma(t)}\,, \qquad (t, x) \in \mathbb R_+ \times \mathbb R \,,\]
which implies
   \begin{align} \label{O1}
	    \mathbb EX_1 &= -{\rm i}\, \frac{\partial \varphi_t(x)}{\partial x}\Big|_{x=0}  
	                  = -\frac{\rm i}{E_{\alpha, \beta}^\gamma(t)}\, \frac{\partial}{\partial x}\, E_{\alpha, \beta}^\gamma
											\left(t{\rm e}^{{\rm i}x}\right)\Big|_{x=0} \notag \\
									 &=  \gamma t \, \frac{E_{\alpha, \alpha+\beta}^{\gamma+1}(t)}{E_{\alpha, \beta}^\gamma(t)}
									  =  \frac{t}{E_{\alpha, \beta}^\gamma(t)}\,\left(E_{\alpha, \beta}^\gamma(t)\right)'\,,
	 \end{align} 
from which we infer
   \begin{equation} \label{O2}
	    \frac{\partial}{\partial x}\, E_{\alpha, \beta}^\gamma\left(t{\rm e}^{{\rm i}x}\right)\Big|_{x=0} 
			              = {\rm i}\,t\,\left(E_{\alpha, \beta}^\gamma(t)\right)'\,.
	 \end{equation}
Similarly we have 
   \begin{align} \label{O3}
			\mathbb EX_1^2 &= -\frac{\partial^2 \varphi_t(x)}{\partial x^2}\Big|_{x=0} 
			                = - \frac1{E_{\alpha, \beta}^\gamma(t)}\,
										    \frac{\partial^2}{\partial x^2}E_{\alpha, \beta}^\gamma\left(t{\rm e}^{{\rm i}x}\right) \Big|_{x=0} \notag \\
			               &= \frac1{E_{\alpha, \beta}^\gamma(t)} \left\{ \gamma t\, E_{\alpha, \alpha+\beta}^{\gamma+1}(t) 
										  + \gamma(\gamma+1)t^2\, E_{\alpha, 2\alpha+\beta}^{\gamma+2}(t)\right\}\,,
	 \end{align}
and since 
   \begin{equation} \label{X1}
	    \left(E_{\alpha, \beta}^\gamma(t)\right)'' = \gamma(\gamma+1) \, E_{\alpha, 2\alpha+\beta}^{\gamma+2}(t)\,,
	 \end{equation}
we conclude
   \begin{equation} \label{O4}
	    \frac{\partial^2}{\partial x^2}E_{\alpha, \beta}^\gamma\left(t{\rm e}^{{\rm i}x}\right) \Big|_{x=0} 
			     + t\,\left(E_{\alpha, \beta}^\gamma(t)\right)' + t^2\,\left(E_{\alpha, \beta}^\gamma(t)\right)'' = 0 \, .
	 \end{equation}
Now, we are ready to formulate our next set of results.

\begin{theorem} For all $\min\{\alpha, \beta, \gamma\}>0$ and $t>0$ there holds the first order Laguerre inequality:
   \begin{equation} \label{O5} 
      \Delta_L(t) \equiv \left[\frac{\partial}{\partial x}E_{\alpha, \beta}^\gamma\left(t{\rm e}^{{\rm i}x}\right) \Big|_{x=0}\right]^2 
			            - E_{\alpha, \beta}^\gamma(t) \cdot \frac{\partial^2}{\partial x^2}E_{\alpha, \beta}^\gamma
									  \left(t{\rm e}^{{\rm i}x}\right) \Big|_{x=0} \geq 0,
	 \end{equation}
Moreover, for the Laguerre difference the following functional upper bounds hold 
   \begin{equation} \label{O6}
	    \Delta_L(t) \leq \begin{cases}
			                    \dfrac{\gamma}{ \Gamma_0\Gamma_1\, t(1-t)^{2\gamma+1}} \hspace{2.9cm} 
													\min\{\alpha, \beta, \gamma\}>0,\, t\in (0,1) \\ \\
													\dfrac{\gamma\,{}_1F_1(\gamma; \beta; t)\,{}_1F_1(\gamma+1; \alpha+\beta; t)}
													{t\,\Gamma(\beta)\, \Gamma(\alpha+\beta)}	\qquad \alpha \geq 1,\, \beta \geq t_0;\, t>0
											 \end{cases}\,,
	 \end{equation}
where $\Gamma_0, \Gamma_1, t_0$ remain unchanged from  {\rm Theorem 3}.
\end{theorem}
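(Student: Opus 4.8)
The plan is to read the first--order Laguerre inequality \eqref{O5} straight off the variance of $X_1$. Writing $F(x) = E_{\alpha, \beta}^\gamma(t\,{\rm e}^{{\rm i}x})$, the two evaluations \eqref{O2} and \eqref{O4} give $F'(0) = {\rm i}\,t\,(E_{\alpha, \beta}^\gamma)'(t)$ and $F''(0) = -t(E_{\alpha, \beta}^\gamma)'(t) - t^2(E_{\alpha, \beta}^\gamma)''(t)$, while $F(0) = E_{\alpha, \beta}^\gamma(t)$. Since the characteristic function yields $\mathbb E X_1 = -{\rm i}\,F'(0)/F(0)$ and $\mathbb E X_1^2 = -F''(0)/F(0)$, a one--line substitution collapses the definition of $\Delta_L$ to
\[ \Delta_L(t) = [F'(0)]^2 - F(0)F''(0) = \big[E_{\alpha, \beta}^\gamma(t)\big]^2\big(\mathbb E X_1^2 - (\mathbb E X_1)^2\big) = \big[E_{\alpha, \beta}^\gamma(t)\big]^2\,\mathrm{Var}(X_1). \]
Nonnegativity of the variance then delivers \eqref{O5} with no further computation.

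For the upper bounds \eqref{O6} I would first render the right--hand side fully explicit. Using the elementary differentiation rule $(E_{\alpha, \beta}^\gamma)' = \gamma E_{\alpha, \alpha+\beta}^{\gamma+1}$ together with \eqref{X1}, the three--term form of $\Delta_L$ regroups as
\[ \Delta_L(t) = t\gamma\,E_{\alpha, \beta}^\gamma(t)\,E_{\alpha, \alpha+\beta}^{\gamma+1}(t) - t^2\,\big[\big((E_{\alpha, \beta}^\gamma)'(t)\big)^2 - E_{\alpha, \beta}^\gamma(t)\,(E_{\alpha, \beta}^\gamma)''(t)\big]. \]
The bracketed quantity is exactly the ordinary order--one Laguerreian difference of the entire function $E_{\alpha, \beta}^\gamma$ in the real variable $t$. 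Granting its nonnegativity, the subtracted term is $\le 0$ and may be discarded, leaving the single--product majorant $\Delta_L(t) \le t\gamma\,E_{\alpha, \beta}^\gamma(t)\,E_{\alpha, \alpha+\beta}^{\gamma+1}(t)$.

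It then remains to bound this product factor by factor. On $(0,1)$ I would apply \eqref{N10} twice, the second time with $\beta \mapsto \alpha+\beta$, $\gamma \mapsto \gamma+1$, for which $\min_{n \in \mathbb N_0}\Gamma(\alpha n + \alpha + \beta) = \Gamma_1$; this gives $E_{\alpha, \beta}^\gamma(t) \le \Gamma_0^{-1}(1-t)^{-\gamma}$ and $E_{\alpha, \alpha+\beta}^{\gamma+1}(t) \le \Gamma_1^{-1}(1-t)^{-(\gamma+1)}$, whose product reproduces the first line of \eqref{O6}. Under $\alpha \ge 1$, $\beta \ge t_0$ the companion Kummer estimates \eqref{N11}, applied to the same two factors, produce the second line in identical fashion.

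The one genuinely non--routine ingredient is the nonnegativity of the bracketed difference, i.e. $\big((E_{\alpha, \beta}^\gamma)'\big)^2 \ge E_{\alpha, \beta}^\gamma\,(E_{\alpha, \beta}^\gamma)''$, equivalently the log--concavity of $t \mapsto E_{\alpha, \beta}^\gamma(t)$ on the positive axis; this does \emph{not} follow from the variance identity of the first paragraph. Expanding the Cauchy product shows the coefficient of $t^{m-2}$ in this difference equals $\tfrac12\sum_{j+k=m}\big[m-(j-k)^2\big]a_ja_k$ with $a_k = (\gamma)_k/\big(k!\,\Gamma(\alpha k+\beta)\big)$, a sign pattern that is indefinite term by term, so I expect the crux to lie precisely in securing this log--concavity --- either through a Tur\'an--type inequality for the coefficient sequence $(a_k)$ in the spirit of Section 3, or by invoking the known log--concavity results of Baricz for Mittag--Leffler type functions. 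Should one wish to bypass it, discarding instead only the square $-t^2[(E_{\alpha, \beta}^\gamma)']^2$ (that is, dropping only $(\mathbb E X_1)^2$) yields the unconditional two--term majorant $t\gamma E_{\alpha, \beta}^\gamma E_{\alpha, \alpha+\beta}^{\gamma+1} + t^2\gamma(\gamma+1)E_{\alpha, \beta}^\gamma E_{\alpha, 2\alpha+\beta}^{\gamma+2}$, which the same Lemma bounds control at the cost of an extra $\Gamma_2$--term.
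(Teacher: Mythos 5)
Your proof of the nonnegativity \eqref{O5} is correct and is exactly the paper's argument: $\Delta_L(t)=[E_{\alpha,\beta}^\gamma(t)]^2\,\mathrm{Var}(X_1)\ge 0$. The problem is in the second half. You correctly derive the identity
\[
\Delta_L(t)=\gamma t\,E_{\alpha,\beta}^\gamma(t)\,E_{\alpha,\alpha+\beta}^{\gamma+1}(t)-t^2\Bigl[\bigl((E_{\alpha,\beta}^\gamma)'(t)\bigr)^2-E_{\alpha,\beta}^\gamma(t)\,(E_{\alpha,\beta}^\gamma)''(t)\Bigr],
\]
but then you use it in the wrong direction: you try to discard the bracketed term by \emph{assuming} the log--concavity of $E_{\alpha,\beta}^\gamma$, which you admit you cannot prove, and this leaves a genuine gap. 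Moreover, even granting log--concavity, your majorant is $\gamma t\,E_{\alpha,\beta}^\gamma E_{\alpha,\alpha+\beta}^{\gamma+1}$, which after Lemma~1 gives $\gamma t/(\Gamma_0\Gamma_1(1-t)^{2\gamma+1})$ and $\gamma t\,{}_1F_1\cdot{}_1F_1/(\Gamma(\beta)\Gamma(\alpha+\beta))$ --- off by a factor $t^2$ from \eqref{O6}. On $(0,1)$ this happens to be stronger than the first line, but for $t>1$ it is strictly weaker than the second line, so your argument does not deliver \eqref{O6} on the full range $t>0$.

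The step you are missing is that no log--concavity is needed at all: the paper reads your displayed identity backwards. Since $\Delta_L(t)\ge 0$ has already been proved (equivalently, \eqref{X2}: $E_{\alpha,\beta}^\gamma\,[(E_{\alpha,\beta}^\gamma)'+t(E_{\alpha,\beta}^\gamma)'']\ge t[(E_{\alpha,\beta}^\gamma)']^2$), the identity rearranges to
\[
t\Bigl[\bigl((E_{\alpha,\beta}^\gamma)'(t)\bigr)^2-E_{\alpha,\beta}^\gamma(t)(E_{\alpha,\beta}^\gamma)''(t)\Bigr]\le E_{\alpha,\beta}^\gamma(t)\,(E_{\alpha,\beta}^\gamma)'(t)=\gamma\,E_{\alpha,\beta}^\gamma(t)\,E_{\alpha,\alpha+\beta}^{\gamma+1}(t).
\]
That is, the variance inequality \emph{is} the upper bound for the ordinary (in $t$) first--order Laguerreian difference $[F']^2-FF''$ of $F=E_{\alpha,\beta}^\gamma$, which is the quantity actually bounded in \eqref{O6} (the paper's notation is admittedly ambiguous here, since \eqref{O5} defines $\Delta_L$ through $x$--derivatives while the proof and \eqref{O6} bound the $t$--derivative version --- the two differ exactly by your identity). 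Dividing by $t$ produces the $1/t$ in front, and applying \eqref{N10}, respectively \eqref{N11}, to the two factors (with $\beta\mapsto\alpha+\beta$, $\gamma\mapsto\gamma+1$ in the second, noting $\min_{n\ge 0}\Gamma(\alpha n+\alpha+\beta)=\Gamma_1$) gives both lines of \eqref{O6} for the stated parameter ranges. In short: the conclusion you flagged as ``the one genuinely non--routine ingredient'' is not needed, and the upper bound is a one--line rearrangement of what you had already established in your first paragraph.
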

	
\begin{proof} By the moment inequality $\mathbb EX_1^2 \geq \big[\mathbb EX_1\big]^2$, \eqref{O1} and \eqref{O3} we deduce the 
first assertion of the theorem. Next, replacing the partial derivatives in \eqref{O5} with the derivatives in $t$ from \eqref{O2} 
and \eqref{O4} we immediately get 
   \begin{equation} \label{X2}
	    E_{\alpha, \beta}^\gamma(t) \left[ \left(E_{\alpha, \beta}^\gamma(t)\right)'
	            + t\,\left(E_{\alpha, \beta}^\gamma(t)\right)''\right] \geq t\,\left[ \left(E_{\alpha, \beta}^\gamma(t)\right)' \right]^2\,, 
	 \end{equation}
or equivalently
   \[ t\,\Delta_L(t) \leq E_{\alpha, \beta}^\gamma(t) \cdot \left(E_{\alpha, \beta}^\gamma(t)\right)'
	                = \gamma \, E_{\alpha, \beta}^\gamma(t) \cdot E_{\alpha, \alpha+\beta}^{\gamma+1}(t)\, .\]
Now, applying the estimates \eqref{N10}, \eqref{N11} (having in mind to use $t \mapsto t^{\frac1\alpha}$) to the 
right--hand--side expression respectively, we arrive at the second statement \eqref{O6} of the theorem.
\end{proof} 

\begin{remark} 
We point out that \eqref{N9} is in fact equivalent to \eqref{X2}. \hfill $\Box$
\end{remark}

Finally it is worth to mention two recurrence properties of the Prabhakar function's $t$--derivatives. 

\begin{proposition} For all $\min\{\alpha, \beta, \gamma\}>0$ and $t>0$ there hold:
   \begin{align*} 
	    \alpha\gamma \,t\, E_{\alpha, \alpha+\beta+1}^{\gamma+1}(t) &= E_{\alpha, \beta}^\gamma(t) - \beta\, E_{\alpha, \beta+1}^\gamma(t)\\ 
			\alpha^2 \gamma (\gamma+1)\,t^2\, E_{\alpha, 2\alpha+\beta+2}^{\gamma+2}(t) &= E_{\alpha, \beta}^\gamma(t) 
			               - (\alpha+2\beta+1)\, E_{\alpha, \beta+1}^\gamma(t)\\ 
										&\qquad + (\alpha+\beta+1)(\beta+1)\, E_{\alpha, \beta+2}^\gamma(t)\, . 
	 \end{align*}
\end{proposition}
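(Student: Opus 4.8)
The plan is to prove both identities by the most direct route: insert the defining series \eqref{M1} on each side, reindex so that everything becomes a single power series in $t$, collapse the Pochhammer symbols via the shift rule $(\gamma+p)_{n-p} = (\gamma)_n/(\gamma)_p$, and compare coefficients of $t^n$. The only genuine work is an elementary manipulation of the reciprocal Gamma factors; no analytic input (convergence, differentiation) is needed beyond the absolute convergence that already justifies the term-by-term rearrangement for $\min\{\alpha,\beta,\gamma\}>0$.

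For the first identity I would transform the right-hand side. Writing both Prabhakar functions as series in the common index $n$ and factoring out $(\gamma)_n\, t^n/n!$, the bracketed coefficient is
\[
\frac{1}{\Gamma(\alpha n+\beta)} - \frac{\beta}{\Gamma(\alpha n + \beta + 1)}.
\]
Using $\Gamma(\alpha n+\beta+1) = (\alpha n + \beta)\,\Gamma(\alpha n + \beta)$ this collapses to $\alpha n/\Gamma(\alpha n + \beta + 1)$, which annihilates the $n=0$ term. Thus the right-hand side equals $\alpha\sum_{n\geq 1}(\gamma)_n\, n\, t^n/(n!\,\Gamma(\alpha n + \beta + 1))$. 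On the left, pulling $\alpha\gamma t$ inside the sum, shifting $n = k+1$, and using $(\gamma+1)_{n-1} = (\gamma)_n/\gamma$ produces the identical series, which closes the first case.

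For the second identity I would again reduce the right-hand side. Factoring out $(\gamma)_n\, t^n/n!$, the coefficient is a three-term combination of $1/\Gamma(\alpha n + \beta + j)$, $j=0,1,2$; pulling out $1/\Gamma(\alpha n + \beta + 2)$ and applying the two Gamma recurrences leaves the polynomial bracket
\[
(\alpha n+\beta+1)(\alpha n+\beta) - (\alpha+2\beta+1)(\alpha n + \beta+1) + (\alpha+\beta+1)(\beta+1).
\]
This is where all the checking lives: one verifies that the $\beta$- and constant-terms cancel and the bracket reduces exactly to $\alpha^2 n(n-1)$. Hence the right-hand side equals $\alpha^2\sum_{n\geq 2}(\gamma)_n\, t^n/((n-2)!\,\Gamma(\alpha n + \beta + 2))$, and on the left the prefactor $\alpha^2\gamma(\gamma+1)t^2$, the shift $n=k+2$, and $(\gamma+2)_{n-2}=(\gamma)_n/(\gamma(\gamma+1))$ reproduce the same series.

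The main obstacle is purely algebraic: the exact collapse of the quadratic bracket in the second identity to $\alpha^2 n(n-1)$. It is routine but must be executed carefully, since it is precisely the cancellation of the constant and linear-in-$\beta$ terms that forces the prefactor $\alpha^2\gamma(\gamma+1)t^2$ together with the doubly shifted parameters $(2\alpha+\beta+2,\ \gamma+2)$ on the left. Everything else — the index shifts and Pochhammer collapses — is mechanical bookkeeping that mirrors the simpler first identity.
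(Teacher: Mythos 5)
Your proof is correct --- I checked both coefficient collapses, and in particular the quadratic bracket in the second identity does reduce to $\alpha^2 n(n-1)$ exactly as you claim, after which the index shift $n=k+2$ and $(\gamma+2)_{n-2}=(\gamma)_n/(\gamma(\gamma+1))$ match the left-hand side term by term. Your route, however, differs from the paper's in how the second identity is obtained. The paper proves the first recurrence essentially as you do (its ``direct calculation'' of $\alpha t\,(E_{\alpha,\beta+1}^\gamma(t))'=E_{\alpha,\beta}^\gamma(t)-\beta E_{\alpha,\beta+1}^\gamma(t)$ is the same $\alpha n/\Gamma(\alpha n+\beta+1)$ collapse, repackaged via the derivative formula $(E_{\alpha,\beta+1}^\gamma)'=\gamma E_{\alpha,\alpha+\beta+1}^{\gamma+1}$), but it then gets the second recurrence by bootstrapping: shift $\beta\mapsto\beta+1$ in the first identity, differentiate, multiply by $\alpha t$, and invoke the second-derivative formula $(E_{\alpha,\beta}^\gamma)''=\gamma(\gamma+1)E_{\alpha,2\alpha+\beta+2}^{\gamma+2}$ from \eqref{X1}. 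That avoids the quadratic bracket algebra entirely, at the cost of relying on the derivative identities established earlier in the paper and on substituting the first recurrence back in twice. Your treatment is self-contained and treats both identities uniformly by coefficient comparison, which makes the source of the prefactor $\alpha^2\gamma(\gamma+1)t^2$ and the shifted parameters transparent; the paper's is shorter once \eqref{O1} and \eqref{X1} are available and makes the recursive structure (each identity arising from the previous one by $t\mapsto$ differentiation) explicit. Both arguments are valid under $\min\{\alpha,\beta,\gamma\}>0$, where the series are entire in $t$ and all rearrangements are justified.
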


\begin{proof} Indeed, by direct calculation we show the identity
   \[ \alpha \,t\, \left(E_{\alpha, \beta+1}^\gamma(t)\right)' = E_{\alpha, \beta}^\gamma(t) - \beta\, E_{\alpha, \beta+1}^\gamma(t).\]
Since $\big(E_{\alpha, \beta+1}^\gamma(t)\big)' = \gamma E_{\alpha, \alpha+\beta+1}^{\gamma+1}(t)$ (see \eqref{O1}), we have
   \[ \alpha \gamma\,t\, E_{\alpha, \alpha+\beta+1}^{\gamma+1}(t) = E_{\alpha, \beta}^\gamma(t) - \beta\, E_{\alpha, \beta+1}^\gamma(t).\] 
Differentiating this equation, previously adapted with $\beta \mapsto \beta+1$ and multiplying with $\alpha t>0$, taking into 
account \eqref{X1}, we get 
	\begin{align*}
		  \alpha^2 \gamma\, t^2\,\left(E_{\alpha, \alpha+\beta+2}^{\gamma+1}(t)\right)' 
	        &= \alpha^2 \gamma(\gamma+1)\,t^2\, E_{\alpha, 2\alpha+\beta+2}^{\gamma+2}(t) \\
	        &= E_{\alpha, \beta}^\gamma(t) - (\alpha+2\beta+1)\, E_{\alpha, \beta+1}^\gamma(t) \\
					&\qquad + (\alpha+\beta+1)(\beta+1)\, E_{\alpha, \beta+2}^\gamma(t) \,, 
	 \end{align*}
which finishes the proof of the second recurrence.
\end{proof}

\section*{Acknowledgement}
\v Zivorad Tomovski acknowledges NWO grant 2015/2016, Department of Applied Mathematics, TU Delft.

\end{document}